\newtheorem{theorem}{Theorem}
\newtheorem{corollary}[theorem]{Corollary}
\newtheorem{definition}[theorem]{Definition}
\newtheorem{example}[theorem]{Example}
\newtheorem{lemma}[theorem]{Lemma}
\newtheorem{remark}[theorem]{Remark}
\newenvironment{proof}[1][Proof]{\noindent\textbf{#1.} }{\ \rule{0.5em}{0.5em}}
\begin{document}

\title{The Geometric Invariants of Null Cartan Curves Under The Similarity Transformations}
\author{Hakan Simsek
\and Mustafa \"{O}zdemir}
\maketitle

\begin{abstract}
In this paper, we study the differential geometry of null Cartan curves under
the similarity transformations in the Minkowski space-time. Besides, we extend
the fundamental theorem for a null Cartan curve according to a similarity
motion. We find the equations of all self-similar null curves which is given
its shape Cartan curvatures.

\qquad\ 

\textbf{Keywords : Lorentzian Similarity Geometry, Similarity Transformation,
Similar null curves, Cartan curves. }

\textbf{MSC 2010 : 14H50, 14H81, 53A35, 53A55, 53B30.}

\end{abstract}

\section{\textbf{Introduction}}

\qquad A similarity transformation (or similitude) of Euclidean space, which
consists of a rotation, a translation and an isotropic scaling, is an
automorphism preserving the angles and ratios between lengths. The structure,
which forms geometric properties unchanged by similarity transformations, is
called the \textit{similarity geometry}. The whole Euclidean geometry can be
considered as a glass of similarity geometry. The similarity transformations
are studying in most area of the pure and applied mathematics.

\qquad Curve matching is an important research area in the computer vision and
pattern recognition, which can help us determine what category the given curve
belongs to. S. Li \cite{vision2, vision3} presented a system for matching and
pose estimation of 3D space curves under the similarity transformation. Brook
et al. \cite{vision00} discussed various problems of image processing and
analysis by using the similarity transformation. Sahbi \cite{vision0}
investigated a method for shape description based on kernel principal
component analysis (KPCA) in the similarity invariance of KPCA. On the other
hand, Chou and Qu \cite{chaos} showed that the motions of curves in two, three
and n-dimensional ($n>3$) similarity geometries correspond to the Burgers
hierarchy, Burgers-mKdV hierarchy and a multi-component generalization of
these hierarchies by using the similarity invariants of curves.

\qquad The idea of self-similarity is one of the most basic and fruitful ideas
in mathematics. A self-similar object is exactly similar to a part of itself,
which in turn remains similar to a smaller part of itself, and so on. In the
last few decades, the self-similarity notion led to the areas such as fractal
geometry, dynamical systems, computer networks and statistical physics.
Mandelbrot presented the first description of self-similar sets, namely sets
that may be expressed as unions of rescaled copies of themselves. He called
these sets fractals, which are systems that present such self-similar behavior
and the examples in nature are many. The Cantor set, the von Koch snowflake
curve and the Sierpinski gasket are some of the most famous examples of such
sets (see \cite{fractal1, fractal2, fractal3, fractal4}).

\qquad When the Euclidean space is endowed with the Lorentzian inner product,
we obtain the \textit{Lorentzian similarity geometry. }The Lorentzian flat
geometry\textit{ }is inside the\textit{ }Lorentzian similarity
geometr\textit{y. }Aristide \cite{simlorentz0} investigated the closed\textit{
}Lorentzian similarity manifolds.\textit{\ }Kamishima \cite{simlorentz}%
\textit{ }studied the properties of compact Lorentzian similarity manifolds
using developing maps and holonomy representations. The geometric invariants
of null curves in the Lorentzian similarity geometry have not been considered
so far.

\qquad Bonnor \cite{bonnor} introduced the Cartan frame to study the behaviors
of a null curve and proved the fundamental existence and congruence theorems
in Minkowski space-time. Bejancu \cite{null0} represented a method for the
general study of the geometry of null curves in Lorentz manifolds and, more
generally, in semi-Riemannian manifolds (see also the book \cite{duggal}).
Ferrandez, Gimenez and Lucas \cite{null1} gave a reference along a null curve
in an n-dimensional Lorentzian space. They showed the fundamental existence
and uniqueness theorems and described the null helices in higher dimensions.
C\"{o}ken and Ciftci \cite{coken} studied null curves in the Minkowski
space-time and characterized pseudo-spherical null curves and Bertrand null curves.

\qquad The study of the geometry of null curves has a growing importance in
the mathematical physics. The null curves use at the solution of some
equations in the classical relativistic string theory (see \cite{nullp0,
nullp1, nullp2}) Moreover, there exists a geometric particle model associated
with the geometry of null curves in the Minkowski space-time (see
\cite{nullpp, nullp}).

\qquad Berger \cite{berger} represented the broad content of similarity
transformations in the arbitrary dimensional Euclidean spaces. Encheva
and\textsc{ }Georgiev \cite{shape, similar} studied the differential geometric
invariants of curves according to a similarity in the finite dimensional
Euclidean spaces. The main idea of this paper is to introduce the differential
geometry of a null curve under the pseudo-similarity mapping and determine the
self-similar null curves in the Lorentzian similarity geometry.

\qquad The scope of paper is as follows. First, we give basic informations
about null Cartan curves. Then, we introduce a new parameter, which is called
pseudo-de Sitter parameter and is invariant under the similarity
transformation, to study null curves in Lorentzian similarity geometry. We
represent the differential geometric invariants of a null Cartan curve, which
are called shape Cartan curvatures, according to the group of similarity
transformations in the Minkowski space-time. We prove the uniqueness theorem
which states that two null Cartan curves having same the shape Cartan
curvatures are equivalent according to a similarity. Furthermore, we show the
existence theorem that is a process for constructing a null Cartan curve by
the shape Cartan curvatures under some initial conditions. Lastly, we obtain
equations of all self-similar null Cartan curves, whose shape Cartan
curvatures are real constant.

\section{Preliminaries}

\qquad Let $\mathbf{u}=\left(  u_{1},u_{2},u_{3},u_{4}\right)  $ and
$\mathbf{v}=\left(  v_{1},v_{2},v_{3},v_{4}\right)  $ be two arbitrary vectors
in Minkowski space-time $\mathbb{M}^{4}$. The Lorentzian inner product of
$\mathbf{u}$ and $\mathbf{v}$ can be stated as $\mathbf{u}\cdot\mathbf{v}%
=\mathbf{u}I^{\ast}\mathbf{v}^{T}$ where $I^{\ast}=diag(-1,1,1,1).$ We say
that a vector $\mathbf{u}$ in $\mathbb{M}^{4}$ is called spacelike, null
(lightlike) or timelike if $\mathbf{u\cdot u}>0,$ $\mathbf{u\cdot u}=0$ or
$\mathbf{u\cdot u}<0,$ respectively. The norm of the vector $\mathbf{u}$ is
represented by $\left\Vert \mathbf{u}\right\Vert =\sqrt{\left\vert
\mathbf{u}\cdot\mathbf{u}\right\vert }$.

\qquad We can describe the pseudo-hyperspheres in $\mathbb{M}^{4}$ as follows:
The \textit{hyperbolic 3-space} is defined by%
\[
H^{3}\left(  -1\right)  =\left\{  \mathbf{u}\in\mathbb{M}^{4}:\mathbf{u\cdot
u}=-1\right\}
\]
and \textit{de Sitter 3-space} is defined by
\[
S_{1}^{3}=\left\{  \mathbf{u}\in\mathbb{M}^{4}:\mathbf{u\cdot u}=1\right\}
\text{ \ (\cite{semi riemann, grub}).}%
\]

\qquad A basis $\boldsymbol{B}=\left\{  \mathbf{L},\mathbf{N},\mathbf{W}%
_{1},\mathbf{W}_{2}\right\}  $ of $\mathbb{M}^{4}$ is said to
\textit{pseudo-orthonormal} if it satisfies the following equations:%
\[
\mathbf{L\cdot L}=\mathbf{N\cdot N}=0,\text{ \ \ \ \ }\mathbf{L}%
\cdot\mathbf{N}=1,
\]%
\[
\mathbf{L\cdot W}_{i}=\mathbf{N\cdot W}_{i}=\mathbf{W}_{1}\cdot\mathbf{W}%
_{2}=0,
\]%
\[
\mathbf{W}_{i}\cdot\mathbf{W}_{i}=1
\]
where $i\in\left\{  1,2\right\}  $ (\cite{duggal})$.$

\qquad Now, we consider the mapping $\mathbf{\varphi}:\left(  \mathbf{\bar{L}%
},\mathbf{\bar{N}},\mathbf{\bar{W}}_{1},\mathbf{\bar{W}}_{2}\right)
\rightarrow\left(  \mathbf{L},\mathbf{N},\mathbf{W}_{1},\mathbf{W}_{2}\right)
$ of one pseudo-orthonormal basis onto another at any point $P$ in
$\mathbb{M}^{4},$ defined by
\begin{equation}%
\begin{bmatrix}
\mathbf{L}\\
\mathbf{N}\\
\mathbf{W}_{1}\\
\mathbf{W}_{2}%
\end{bmatrix}
=%
\begin{bmatrix}
\lambda & 0 & 0 & 0\\
-\frac{1}{2}\lambda\left(  \varepsilon^{2}+\zeta^{2}\right)  & \lambda^{-1} &
-\varepsilon & \zeta\\
\lambda\varepsilon\cos\theta+\lambda\zeta\sin\theta & 0 & \cos\theta &
-\sin\theta\\
\lambda\varepsilon\sin\theta-\lambda\zeta\cos\theta & 0 & \sin\theta &
\cos\theta
\end{bmatrix}%
\begin{bmatrix}
\mathbf{\bar{L}}\\
\mathbf{\bar{N}}\\
\mathbf{\bar{W}}_{1}\\
\mathbf{\bar{W}}_{2}%
\end{bmatrix}
\label{rot}%
\end{equation}
where $\gamma,\varepsilon,\zeta$ and $\theta$ are real constants and
$\lambda\neq0.$ The image of pseudo-orthonormal basis under the mapping
$\mathbf{\varphi}$ is a pseudo-orthonormal basis. Moreover, the orientation is
preserved by $\left(  \ref{rot}\right)  .$ Bonnor \cite{bonnor} defined the
mapping $\mathbf{\varphi}$ as a \textit{null rotation}. A null rotation at $P$
is equivalent to a Lorentzian transformation between two sets of natural
coordinate functions whose values coincide at $P$.

\qquad A curve locally parameterized by $\gamma:J\subset%
\mathbb{R}
\rightarrow\mathbb{M}^{4}$ is called a null curve if $\gamma^{\prime}(t)\neq0$
is a null vector for all $t$. We know that a\ null curve $\gamma(t)$ satisfies
$\gamma^{\prime\prime}(t)\cdot\gamma^{\prime\prime}(t)\geq0$ (\cite{duggal}).
If $\gamma^{\prime\prime}(t)\cdot\gamma^{\prime\prime}(t)=1,$ then it is said
that a null curve $\gamma(t)$ in $\mathbb{M}^{4}$ is parameterized by
\textit{pseudo-arc}. If we assume that the acceleration vector of the null
curve is not null, the pseudo-arc parametrization becomes as the following%
\begin{equation}
s=\int_{t_{0}}^{t}\left(  \gamma^{\prime\prime}(u)\cdot\gamma^{\prime\prime
}(u)\right)  ^{1/4}du\text{ \ (\cite{coken, bonnor}).} \label{n0}%
\end{equation}

\qquad A null curve $\gamma(t)$ in $\mathbb{M}^{4}$ with $\gamma^{\prime
\prime}(t)\cdot\gamma^{\prime\prime}(t)\neq0$ is a \textit{Cartan curve }if
$F_{\gamma}:=\left\{  \gamma^{\prime}(t),\gamma^{\prime\prime}(t),\gamma
^{(3)}(t),\gamma^{(4)}(t)\right\}  $ is linearly independent for any $t$.
There exists a unique \textit{Cartan frame} $C_{\gamma}:=\left\{
\mathbf{L},\mathbf{N},\mathbf{W}_{1},\mathbf{W}_{2}\right\}  $ of the Cartan
curve has the same orientation with $F_{\gamma}$ according to pseudo
arc-parameter $t,$ such that the following equations are satisfied;
\begin{align}
\gamma^{\prime} &  =\mathbf{L},\nonumber\\
\mathbf{L}^{\prime} &  =\mathbf{W}_{1},\nonumber\\
\mathbf{N}^{\prime} &  =\kappa\mathbf{W}_{1}+\tau\mathbf{W}_{2}\label{n1}\\
\mathbf{W}_{1}^{\prime} &  =-\kappa\mathbf{L}-\mathbf{N}\nonumber\\
\mathbf{W}_{2}^{\prime} &  =-\tau\mathbf{L}\nonumber
\end{align}
where $\mathbf{N}$ is a null vector, which is called null transversal vector
field, and $C_{\gamma}$ is pseudo-orthonormal and positively oriented. The
functions $\kappa$ and $\tau$ are called the \textit{Cartan curvatures }of
$\gamma\left(  t\right)  $ and their values are given as
\begin{align}
\kappa\left(  t\right)   &  =\frac{1}{2}\left(  \gamma^{(3)}(t)\cdot
\gamma^{(3)}(t)\right)  \label{n2}\\
\tau\left(  t\right)   &  =-\sqrt{\gamma^{(4)}(t)\cdot\gamma^{(4)}(t)-\left(
\gamma^{(3)}(t)\cdot\gamma^{(3)}(t)\right)  ^{2}}\nonumber
\end{align}
for the pseudo-arc parameter $t$. It can be seen the materials \cite{null0},
\cite{null1}, \cite{duggal} and \cite{bonnor} for more information about the
geometry of null curves.

\section{Geometric Invariants of Null Curves in Lorentzian Similarity
Geometry}

\qquad Now, we define a pseudo-similarity transformation for null curves in
$\mathbb{M}^{4}$. A\emph{ pseudo-similarity (p-similarity)} of Minkowski
space-time is a composition of a Lorentzian transformation (or null rotation),
translation and a scaling. Any p-similarity map $f:\mathbb{M}^{4}%
\rightarrow\mathbb{M}^{4}$ is determined by%

\begin{equation}
f\left(  x\right)  =\mu\mathbf{\varphi}\left(  x\right)  +\mathbf{b}%
,\label{05}%
\end{equation}
where $\mu\neq0$ is a real constant, $\mathbf{\varphi}$ is a null rotation and
$\mathbf{b}$ is a translation vector. The p-similarity transformations are a
group under the composition of maps and denoted by \textbf{Simp}$\left(
\mathbb{M}^{4}\right)  $. This group is a fundamental group of the Lorentzian
similarity geometry spanned by the pseudo-orthonormal basis. The p-similarity
transformations in $\mathbb{M}^{4}$ preserve the orientation.

\qquad Let $\gamma\left(  t\right)  :J\subset%
\mathbb{R}
\rightarrow\mathbb{M}^{4}$ be a null curve in $\mathbb{M}^{4}$. We denote
image of $\gamma$ under $f\in$ \textbf{Simp}$\left(  \mathbb{M}^{4}\right)  $
by $\beta$. Then, the null curve $\beta$ can be stated as
\begin{equation}
\beta\left(  t\right)  =\mu\mathbf{\varphi}\gamma\left(  t\right)  +b,\text{
\ \ \ \ \ \ }t\in J. \label{1}%
\end{equation}
The pseudo-arc length function $\beta$ starting at $t_{0}\in J$ is%
\begin{equation}
s^{\ast}\left(  t\right)  =\int\limits_{t_{0}}^{t}\left(  \beta^{\prime\prime
}(u)\cdot\beta^{\prime\prime}(u)\right)  ^{1/4}du=\sqrt{\mu}s\left(  t\right)
\label{1'}%
\end{equation}
where $s\in I\subset%
\mathbb{R}
$ is pseudo-arc parameter of $\gamma:I\rightarrow\mathbb{M}^{4}.$ From now on,
we will denote by a prime \textquotedblright$^{\prime}$\textquotedblright\ the
differentiation with respect to $s$. We can compute the Cartan curvatures
$\kappa_{\beta}\left(  \sqrt{\mu}s\right)  $ and $\tau_{\beta}\left(
\sqrt{\mu}s\right)  $ of $\beta$ by using $\left(  \ref{n2}\right)  $ as
\begin{equation}
\kappa_{\beta}=\frac{1}{\mu}\kappa_{\gamma},\text{ \ \ \ \ }\tau_{\beta}%
=\frac{1}{\mu}\tau_{\gamma}. \label{2}%
\end{equation}

\qquad We define $\mathbf{W}_{2}-$indicatrix $\gamma_{W_{2}}$ of the null
curve $\gamma$ parameterized by $\gamma_{W_{2}}\left(  s\right)
=\mathbf{W}_{2}\left(  s\right)  $. The $\mathbf{W}_{2}-$indicatrix is a
pseudo-hyperspherical curve lies on the de Sitter 3-space $S_{1}^{3}$. Since
the curve $\gamma_{W_{2}}$ is a null curve, the pseudo-arc parameter
$\sigma_{\gamma}$ of $\gamma_{W_{2}}$ can be given as $d\sigma_{\gamma}%
=\sqrt{\tau_{\gamma}}ds$ by using the equation $\left(  \ref{n0}\right)  .$
The parameter $\sigma_{\gamma}$ is invariant under the p-similarity
transformation since it can be easily found $d\sigma_{\beta}=d\sigma_{\gamma
},$ where $\sigma_{\beta}$ is the pseudo-de Sitter parameter of $\beta$.
Therefore, we can reparametrize a null curve with the pseudo-de Sitter
parameter in order to study differential geometry of a null curve under the
p-similarity transformation. The parameter $\sigma_{\gamma}$ is called
\textit{pseudo-de Sitter parameter} of $\gamma.$

\qquad The derivative formulas of $\gamma$ and $C_{\gamma}$ with respect to
$\sigma_{\gamma}$ are given by
\begin{equation}
\frac{d\gamma}{d\sigma_{\gamma}}=\frac{1}{\sqrt{\tau_{\gamma}}}\mathbf{L}%
,\text{ \ \ \ \ \ }\frac{d^{2}\gamma}{d\sigma_{\gamma}^{2}}=\frac
{-d\tau_{\gamma}}{2\tau_{\gamma}d\sigma_{\gamma}}\frac{d\gamma}{d\sigma
_{\gamma}}+\frac{1}{\tau_{\gamma}}\mathbf{W}_{1}\label{4}%
\end{equation}
and
\begin{align}
\frac{d\mathbf{L}}{d\sigma_{\gamma}} &  =\frac{1}{\sqrt{\tau_{\gamma}}%
}\mathbf{W}_{1}\nonumber\\
\frac{d\mathbf{N}}{d\sigma_{\gamma}} &  =\frac{\kappa_{\gamma}}{\sqrt
{\tau_{\gamma}}}\mathbf{W}_{1}+\sqrt{\tau_{\gamma}}\mathbf{W}_{2}\label{n3}\\
\frac{d\mathbf{W}_{1}}{d\sigma_{\gamma}} &  =-\frac{\kappa_{\gamma}}%
{\sqrt{\tau_{\gamma}}}\mathbf{L}-\frac{1}{\sqrt{\tau_{\gamma}}}\mathbf{N}%
\nonumber\\
\frac{d\mathbf{W}_{2}}{d\sigma_{\gamma}} &  =-\sqrt{\tau_{\gamma}}%
\mathbf{L.}\nonumber
\end{align}
Similarly, we can find the same formulas $\left(  \ref{4}\right)  $ and
$\left(  \ref{n3}\right)  $ for the null curve $\beta$.

\qquad Now, we construct a new frame corresponding to p-similarity
transformation for a null curve. Let's denote the functions
\[
\tilde{\tau}_{\gamma}:=\frac{-d\tau_{\gamma}}{2\tau_{\gamma}d\sigma_{\gamma}%
}\text{ and }\tilde{\kappa}_{\gamma}:=\frac{\kappa_{\gamma}}{\tau_{\gamma}},
\]
respectively. The functions $\tilde{\tau}_{\gamma}$ and $\tilde{\kappa
}_{\gamma}$ are invariant under the p-similarity because of $\tilde{\tau
}_{\beta}=\tilde{\tau}_{\gamma}$ and $\tilde{\kappa}_{\beta}=\tilde{\kappa
}_{\gamma}$. Let be
\begin{align*}
\mathbf{L}^{sim}  &  =\sqrt{\tau_{\gamma}}\mathbf{L},\text{ \ \ \ \ \ \ }%
\mathbf{N}^{sim}=\frac{1}{\sqrt{\tau_{\gamma}}}\mathbf{N}\\
\mathbf{W}_{1}^{sim}  &  =\mathbf{W}_{1},\text{ \ \ \ \ \ }\mathbf{W}%
_{2}^{sim}=\mathbf{W}_{2}%
\end{align*}
such that the equality $\mathbf{L}^{sim}\cdot\mathbf{N}^{sim}=1$ is satisfied
and $C_{\gamma}^{sim}:=\left\{  \mathbf{L}^{sim},\mathbf{N}^{sim}%
,\mathbf{W}_{1}^{sim},\mathbf{W}_{2}^{sim}\right\}  $ is a pseudo-orthonormal
frame of $\gamma$. Then, the derivative formulas for $C_{\gamma}^{sim}$ are
\begin{equation}
\frac{d}{d\sigma_{\gamma}}\left(  C_{\gamma}^{sim}\right)  ^{T}=P\left(
C_{\gamma}^{sim}\right)  ^{T} \label{n4}%
\end{equation}
where
\[
P=%
\begin{bmatrix}
0 & 0 & 1 & 0\\
0 & 0 & \tilde{\kappa}_{\gamma} & 1\\
-\tilde{\kappa}_{\gamma} & -1 & 0 & 0\\
-1 & 0 & 0 & 0
\end{bmatrix}
.
\]

\qquad We consider the pseudo-orthogonal frame $C_{\gamma}^{H}:=\left\{
\mathbf{H}_{1}^{\gamma},\mathbf{H}_{2}^{\gamma},\mathbf{H}_{3}^{\gamma
},\mathbf{H}_{4}^{\gamma}\right\}  $ for the null curve $\gamma$ where
\[
\mathbf{H}_{1}^{\gamma}=\frac{1}{\tau_{\gamma}}\mathbf{L}^{sim},\text{
}\mathbf{H}_{2}^{\gamma}=\frac{1}{\tau_{\gamma}}\mathbf{N}^{sim},\text{
}\mathbf{H}_{3}^{\gamma}=\frac{1}{\tau_{\gamma}}\mathbf{W}_{1}^{sim}\text{ and
}\mathbf{H}_{4}^{\gamma}=\frac{1}{\tau_{\gamma}}\mathbf{W}_{2}^{sim}.
\]
Since we can obtain $f(\mathbf{H}_{i}^{\gamma})=\mathbf{H}_{i}^{\beta},$
$i=1,\cdots,4,$ from $\left(  \ref{05}\right)  ,$ the pseudo-orthogonal frame
$C_{\gamma}^{H}$ is invariant according to p-similarity map. Then, using
$\left(  \ref{4}\right)  $ and $\left(  \ref{n4}\right)  ,$ we get the
derivative formulas of $C_{\gamma}^{H}$ as the following
\begin{equation}
\frac{d}{d\sigma}\left(  C_{\gamma}^{H}\right)  ^{T}=\tilde{P}\left(
C_{\gamma}^{H}\right)  ^{T} \label{n5}%
\end{equation}
where
\[
\tilde{P}=%
\begin{bmatrix}
2\tilde{\tau}_{\gamma} & 0 & 1 & 0\\
0 & 2\tilde{\tau}_{\gamma} & \tilde{\kappa}_{\gamma} & 1\\
-\tilde{\kappa}_{\gamma} & -1 & 2\tilde{\tau}_{\gamma} & 0\\
-1 & 0 & 0 & 2\tilde{\tau}_{\gamma}%
\end{bmatrix}
\]

\qquad We can think the equation $\left(  \ref{n5}\right)  $ as the structure
equation of a null curve $\gamma$ according to the pseudo-orthogonal moving
frame $C_{\gamma}^{H}$ and the p-similarity group \textbf{Simp}$\left(
\mathbb{M}^{4}\right)  $. As a result, the following lemma is obtained.

\begin{lemma}
Let $\gamma:I\rightarrow\mathbb{M}^{4}$ be a null Cartan curve with pseudo-de
Sitter parameter $\sigma$ and $\left\{  \kappa_{\gamma},\tau_{\gamma}\right\}
$ be Cartan curvatures of $\gamma$ with the Cartan frame $C_{\gamma}.$ Then,
the functions
\begin{equation}
\tilde{\tau}_{\gamma}=\frac{-d\tau_{\gamma}}{2\tau_{\gamma}d\sigma_{\gamma}%
},\text{ \ \ \ }\tilde{\kappa}_{\gamma}=\frac{\kappa_{\gamma}}{\tau_{\gamma}}
\label{n6}%
\end{equation}
and the pseudo-orthogonal frame $C_{\gamma}^{H}$ are invariant under the
p-similarity transformation\ in the Minkowski space-time and the derivative
formulas of $C_{\gamma}^{H}$ with respect to $\sigma$ are given by the
equation $\left(  \ref{n5}\right)  .$
\end{lemma}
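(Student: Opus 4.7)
The lemma has three assertions to establish: invariance of the two scalar curvatures $\tilde{\tau}_{\gamma},\tilde{\kappa}_{\gamma}$, invariance of the pseudo-orthogonal frame $C_{\gamma}^{H}$ as a whole, and the derivative formula \eqref{n5}. My plan is to derive everything from the three ingredients already collected earlier in the section: the Cartan equations \eqref{n1}, the scaling laws $\kappa_{\beta}=\mu^{-1}\kappa_{\gamma},\ \tau_{\beta}=\mu^{-1}\tau_{\gamma}$ from \eqref{2}, and the parameter invariance $d\sigma_{\beta}=d\sigma_{\gamma}$ already noted.

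For the first claim I would simply substitute \eqref{2} and $d\sigma_{\beta}=d\sigma_{\gamma}$ into the defining formulas \eqref{n6}; the factor $\mu^{-1}$ cancels in $\tilde{\kappa}$, and in $\tilde{\tau}$ the factor $\mu^{-1}$ in $d\tau$ cancels against the $\mu^{-1}$ in $\tau$ in the denominator, so $\tilde{\tau}_{\beta}=\tilde{\tau}_{\gamma}$ and $\tilde{\kappa}_{\beta}=\tilde{\kappa}_{\gamma}$. For the frame invariance I first need to see how the Cartan frame itself transforms under $f(x)=\mu\varphi(x)+\mathbf{b}$: differentiating $\beta(s^{*})=\mu\varphi(\gamma(s))+\mathbf{b}$ with $s^{*}=\sqrt{\mu}\,s$ gives $\mathbf{L}_{\beta}=\sqrt{\mu}\,\varphi(\mathbf{L}_{\gamma})$, and then differentiating once more yields $\mathbf{W}_{1}^{\beta}=\varphi(\mathbf{W}_{1}^{\gamma})$. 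The null condition $\mathbf{L}_{\beta}\cdot\mathbf{N}_{\beta}=1$ together with $\varphi$ being an isometry (Lorentzian) forces $\mathbf{N}_{\beta}=\mu^{-1/2}\varphi(\mathbf{N}_{\gamma})$, and matching coefficients in $\mathbf{N}_{\beta}'=\kappa_{\beta}\mathbf{W}_{1}^{\beta}+\tau_{\beta}\mathbf{W}_{2}^{\beta}$ pins down $\mathbf{W}_{2}^{\beta}=\varphi(\mathbf{W}_{2}^{\gamma})$. Feeding these into the definitions of $C^{sim}$ and then $C^{H}$, the factors $\sqrt{\tau}$ and $\tau$ absorb the $\sqrt{\mu}$ and $\mu$ exactly as needed, so that $\mathbf{H}_{i}^{\beta}=\mu\varphi(\mathbf{H}_{i}^{\gamma})$, which is precisely $f(\mathbf{H}_{i}^{\gamma})=\mathbf{H}_{i}^{\beta}$ at the level of tangent vectors.

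For the structure equation \eqref{n5} I would start from \eqref{n4}, already established, and apply the Leibniz rule to $\mathbf{H}_{i}^{\gamma}=\tau_{\gamma}^{-1}\mathbf{V}_{i}^{sim}$, where $\mathbf{V}_{i}^{sim}$ runs over $\mathbf{L}^{sim},\mathbf{N}^{sim},\mathbf{W}_{1}^{sim},\mathbf{W}_{2}^{sim}$. The derivative splits as $\tfrac{d}{d\sigma}\bigl(\tau_{\gamma}^{-1}\bigr)\mathbf{V}_{i}^{sim}+\tau_{\gamma}^{-1}\tfrac{d}{d\sigma}\mathbf{V}_{i}^{sim}$; the first term produces $-\tau_{\gamma}^{-2}(d\tau_{\gamma}/d\sigma)\mathbf{V}_{i}^{sim}=2\tilde{\tau}_{\gamma}\mathbf{H}_{i}^{\gamma}$ (this is where the diagonal entries $2\tilde{\tau}_{\gamma}$ in $\tilde{P}$ come from), while the second term recovers, after reindexing by $1/\tau_{\gamma}$, exactly the off-diagonal entries of $P$ in \eqref{n4}. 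Assembling the four rows and reading off the coefficients gives \eqref{n5}.

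The only real obstacle is bookkeeping: I must track the three different scalings — the linear $\mu$ from $f$, the reparametrization factor $\sqrt{\mu}$ between $s$ and $s^{*}$, and the rescalings by $\sqrt{\tau_{\gamma}}$ and $\tau_{\gamma}$ in the definitions of $C^{sim}$ and $C^{H}$ — and confirm that in the $H$-frame they cancel uniformly. The computations are otherwise mechanical, so the lemma should follow from a careful but routine chain-rule argument built on the already-established transformation rules \eqref{2} and the identity $d\sigma_{\beta}=d\sigma_{\gamma}$.
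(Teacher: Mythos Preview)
Your proposal is correct and follows essentially the same route as the paper: the lemma there is not given a separate proof but is stated as a summary of the preceding discussion, which is exactly the chain of ingredients you invoke --- the scaling laws \eqref{2} and the parameter identity $d\sigma_{\beta}=d\sigma_{\gamma}$ for the invariance of $\tilde{\kappa}_{\gamma},\tilde{\tau}_{\gamma}$, the relation $f(\mathbf{H}_{i}^{\gamma})=\mathbf{H}_{i}^{\beta}$ (asserted in the paper directly from \eqref{05}) for the frame invariance, and the Leibniz rule applied to $\mathbf{H}_{i}^{\gamma}=\tau_{\gamma}^{-1}\mathbf{V}_{i}^{sim}$ together with \eqref{n4} for the structure equation \eqref{n5}. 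Your explicit tracking of how $\mathbf{L},\mathbf{N},\mathbf{W}_{1},\mathbf{W}_{2}$ transform under $f$ simply fills in details that the paper leaves implicit.
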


\begin{definition}
The functions $\tilde{\tau}_{\gamma}=\frac{-d\tau_{\gamma}}{2\tau_{\gamma
}d\sigma_{\gamma}},$ $\tilde{\kappa}_{\gamma}=\frac{\kappa_{\gamma}}%
{\tau_{\gamma}}$ and the pseudo-orthogonal frame $C_{\gamma}^{H}$ are called
\textit{shape Cartan curvatures} and \textit{shape Cartan frame }of a null
Cartan curve $\gamma,$ respectively.
\end{definition}

\begin{remark}
We consider the $\mathbf{W}_{1}-$indicatrix $\gamma_{W_{1}}$ of null curve
$\gamma$ parameterized by $\gamma_{W_{1}}\left(  s\right)  =\mathbf{W}%
_{1}\left(  s\right)  ,$ where $s$ is a pseudo-arc parameter of $\gamma.$ The
curve $\gamma_{W_{1}}$ is a pseudo-hyperspherical spacelike curve if
$\kappa_{\gamma}>0$ or pseudo-hyperspherical timelike curve if $\kappa
_{\gamma}<0$ on $S_{1}^{3}.$ If $u$ is a arc-parameter of $\gamma_{W_{1}},$
then we can find $du=\sqrt{\left\vert 2\kappa_{\gamma}\right\vert }ds.$ The
parameter $u$ is invariant according to p-similarity transformation;
therefore, it can also be used this parametrization for a null Cartan curve in
Lorentzian similarity geometry.
\end{remark}

\begin{remark}
We take $\mathbf{W}_{1}^{sim}=\dfrac{d\mathbf{L}^{sim}}{d\sigma_{\gamma}}$
instead of $\mathbf{W}_{1}^{sim}=\mathbf{W}_{1}$ in the frame $C_{\gamma
}^{sim}.$ The derivative formulas for a new frame are
\[
\frac{d}{d\sigma_{\gamma}}\left(  C_{\gamma}^{sim}\right)  ^{T}=%
\begin{bmatrix}
0 & 0 & 1 & 0\\
0 & 0 & \tilde{\xi}_{\gamma} & 1\\
-\tilde{\xi}_{\gamma} & -1 & 0 & 0\\
-1 & 0 & 0 & 0
\end{bmatrix}
\left(  C_{\gamma}^{sim}\right)  ^{T}%
\]
where $\tilde{\xi}_{\gamma}=-\tilde{\tau}_{\gamma}^{2}+\tilde{\kappa}_{\gamma
}^{2}.$ It may be considered an alternative frame for a null Cartan curve in
the Lorentzian similarity geometry. However, the problem is that although
$\mathbf{W}_{1}^{sim}$ is a unit spacelike vector, the new frame is not the
pseudo-orthogonal due to $\left\langle \mathbf{W}_{1}^{sim},\mathbf{N}%
^{sim}\right\rangle \neq0.$
\end{remark}

\section{The Fundamental Theorem for a Null Curve in Lorentzian Similarity
Geometry}

\qquad The existence and uniqueness theorems are shown by \cite{null0, null1}
and \cite{bonnor} for a null Cartan curve under the Lorentz transformations.
This notion can be extended with respect to \textbf{Simp}$\left(
\mathbb{M}^{4}\right)  $ for the null Cartan curves parameterized by the
pseudo-de Sitter parameter.

\begin{theorem}
\label{teklik}(Uniqueness Theorem) Let $\gamma,\beta:I\rightarrow
\mathbb{M}^{4}$ be two null Cartan curves parameterized by the same pseudo-de
Sitter parameter $\sigma$, where $I\subset%
\mathbb{R}
$ is an open interval. Suppose that $\gamma$ and $\beta$ have the same shape
Cartan curvatures $\tilde{\tau}_{\gamma}=\tilde{\tau}_{\beta}$ and
$\tilde{\kappa}_{\gamma}=\tilde{\kappa}_{\beta}$ for any $\sigma\in I.$ Then,
there exists a $f\in$\textbf{Simp}$\left(  \mathbb{M}^{4}\right)  $ such that
$\beta=f\circ\gamma.$
\end{theorem}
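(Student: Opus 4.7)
The plan is to construct the desired $f$ at a single base point $\sigma_{0}\in I$ from the initial frame data and then use uniqueness of solutions of the linear system $\left(\ref{n5}\right)$ to propagate the agreement between $f\circ\gamma$ and $\beta$ to all of $I$. The main obstacle I anticipate is the first step below: one must recognise that the definition of $\tilde{\tau}$ is precisely the logarithmic-derivative expression that forces the would-be scaling factor to be a genuine constant, which is what makes it eligible to be the single dilation of a p-similarity.

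First, I would exploit the hypothesis $\tilde{\tau}_{\gamma}=\tilde{\tau}_{\beta}$, which by $\left(\ref{n6}\right)$ reads $\frac{d}{d\sigma}\ln\tau_{\gamma}=\frac{d}{d\sigma}\ln\tau_{\beta}$, to conclude that the ratio $\mu:=\tau_{\gamma}/\tau_{\beta}$ is a nonzero real constant on $I$. This $\mu$ will serve as the dilation factor of the p-similarity $\left(\ref{05}\right)$.

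Next, I would fix any $\sigma_{0}\in I$. Since the sim-frames $C_{\gamma}^{sim}(\sigma_{0})$ and $C_{\beta}^{sim}(\sigma_{0})$ are positively oriented pseudo-orthonormal bases of $\mathbb{M}^{4}$, there is a null rotation $\varphi$ of the form $\left(\ref{rot}\right)$ mapping the first onto the second. Setting $\mathbf{b}:=\beta(\sigma_{0})-\mu\varphi(\gamma(\sigma_{0}))$ and $f(x):=\mu\varphi(x)+\mathbf{b}$ yields an element of \textbf{Simp}$\left(\mathbb{M}^{4}\right)$. A short computation using the definitions of $\mathbf{H}_{i}^{\gamma},\mathbf{H}_{i}^{\beta}$ together with $\tau_{\gamma}=\mu\tau_{\beta}$ then shows that the linear part $\mu\varphi$ carries each $\mathbf{H}_{i}^{\gamma}(\sigma_{0})$ to $\mathbf{H}_{i}^{\beta}(\sigma_{0})$, while by construction $f(\gamma(\sigma_{0}))=\beta(\sigma_{0})$.

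Finally, set $\tilde{\gamma}:=f\circ\gamma$. By the invariance statements of the preceding lemma, $\tilde{\gamma}$ is a null Cartan curve sharing with $\gamma$, hence with $\beta$, the pseudo-de Sitter parameter $\sigma$ and the shape Cartan curvatures $\tilde{\tau},\tilde{\kappa}$; consequently $C_{\tilde{\gamma}}^{H}$ and $C_{\beta}^{H}$ satisfy the same linear ODE $\left(\ref{n5}\right)$ with the same coefficient matrix $\tilde{P}$, and they coincide at $\sigma_{0}$ by construction. Uniqueness of solutions to linear ODEs forces $C_{\tilde{\gamma}}^{H}\equiv C_{\beta}^{H}$ on $I$; in particular $\mathbf{H}_{1}^{\tilde{\gamma}}=\mathbf{H}_{1}^{\beta}$, and since $\left(\ref{4}\right)$ together with the definition of $\mathbf{H}_{1}$ gives $d\gamma/d\sigma=\mathbf{H}_{1}^{\gamma}$, we obtain $d\tilde{\gamma}/d\sigma=d\beta/d\sigma$. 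Integrating and using $\tilde{\gamma}(\sigma_{0})=\beta(\sigma_{0})$ then yields $\beta=\tilde{\gamma}=f\circ\gamma$, as required.
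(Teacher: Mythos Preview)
Your argument is correct, but it follows a genuinely different route from the paper's. The paper does not run an ODE-uniqueness argument on the shape Cartan frame. Instead, after extracting the constant $\mu=\tau_{\gamma}/\tau_{\beta}>0$ exactly as you do, it observes that the rescaled curve $\alpha=\frac{1}{\mu}\varphi(\beta)$ has the \emph{ordinary} Cartan curvatures $\kappa_{\alpha}=\kappa_{\gamma}$ and $\tau_{\alpha}=\tau_{\gamma}$ (using $\left(\ref{2}\right)$ together with $\kappa_{\gamma}=\mu\kappa_{\beta}$, $\tau_{\gamma}=\mu\tau_{\beta}$), and then simply invokes the classical uniqueness theorem for null Cartan curves \cite{null0,null1} to produce a Lorentzian isometry $\phi$ with $\phi(\gamma)=\alpha$; composing gives the desired $f\in\text{\textbf{Simp}}(\mathbb{M}^{4})$. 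In other words, the paper reduces the similarity statement to the known Lorentzian one, whereas you prove it from scratch via $\left(\ref{n5}\right)$ and uniqueness for linear ODE systems. Your route is more self-contained and makes essential use of the invariant frame $C_{\gamma}^{H}$ and the identity $d\gamma/d\sigma=\mathbf{H}_{1}^{\gamma}$; the paper's is shorter but depends on the cited external theorem. One small remark: since $\tau_{\gamma}$ and $\tau_{\beta}$ appear under square roots in the definition of $\sigma$, your constant $\mu$ is in fact positive, as the paper notes; this is worth stating so that $f$ visibly has the form $\left(\ref{05}\right)$.
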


\begin{proof}
Let $\kappa_{\gamma},$ $\tau_{\gamma}$ and $\kappa_{\beta},$ $\tau_{\beta}$ be
the Cartan curvatures and also $s$ and $s^{\ast}$ be the pseudo-arc length
parameters of $\gamma$ and $\beta,$ respectively. Using the equality
$\tilde{\tau}_{\gamma}=\tilde{\tau}_{\beta},$ we get $\tau_{\gamma}=\mu
\tau_{\beta}$ for some real constant $\mu>0.$ Then, the equality
$\tilde{\kappa}_{\gamma}=\tilde{\kappa}_{\beta}$ implies $\kappa_{\gamma}%
=\mu\kappa_{\beta}.$ On the other hand, we can write $ds=\frac{1}{\sqrt{\mu}%
}ds^{\ast}$ from the definition of pseudo-de Sitter parameter $\sigma.$

\qquad Let's consider the map $\Psi:\mathbb{M}^{4}\rightarrow\mathbb{M}^{4}$
defined by $\Psi\left(  x\right)  =\frac{1}{\mu}\mathbf{\varphi}\left(
x\right)  $ where $\mathbf{\varphi}$ is a null rotation. Using the equation
$\left(  \ref{2}\right)  ,$ the null Cartan curves $\alpha=\Psi\left(
\beta\right)  $ and $\gamma$ have the same Cartan curvatures. Then, there
exists a Lorentzian transformation $\phi:\mathbb{M}^{4}\rightarrow
\mathbb{M}^{4}$ according to the uniqueness theorem for the null Cartan curves
(see \cite{null0, null1}) such that $\phi\left(  \gamma\right)  =\alpha.$
Therefore, we have a transformation $f=\Psi^{-1}\circ\phi:\mathbb{M}%
^{4}\rightarrow\mathbb{M}^{4}$ which is a p-similarity and $f\left(
\gamma\right)  =\beta.$
\end{proof}

\qquad The following theorem shows that every two functions determine a null
Cartan curve according to a p-similarity under some initial conditions.

\begin{theorem}
\label{varl}(Existence Theorem) Let $z_{i}:I\rightarrow%
\mathbb{R}
,$ $i=1,2$ be two functions and $\mathbf{L}^{0sim},$ $\mathbf{N}^{0sim},$
$\mathbf{W}_{1}^{0sim},$ $\mathbf{W}_{2}^{0sim}$ be a pseodo-orthonormal frame
at a point $x_{0}$ in the Minkowski space-time. According to a p-similarity
with the center $x_{0}$ there exists a unique null Cartan curve $\gamma
:I\rightarrow\mathbb{M}^{4}$ parameterized by a pseudo-de Sitter parameter
$\sigma$ such that $\gamma$ satisfies the following conditions:

$\left(  i\right)  $ There exists $\sigma_{0}\in I$ such that $\gamma\left(
\sigma_{0}\right)  =x_{0}$ and the shape Cartan frame of $\gamma$ at $x_{0}$
is $\mathbf{L}^{0sim},$ $\mathbf{N}^{0sim},$ $\mathbf{W}_{1}^{0sim},$
$\mathbf{W}_{2}^{0sim}.$

$\left(  ii\right)  $ $\tilde{\kappa}_{\gamma}\left(  \sigma\right)
=z_{1}\left(  \sigma\right)  $ and $\tilde{\tau}_{\gamma}\left(
\sigma\right)  =z_{2}\left(  \sigma\right)  ,$ for any $\sigma\in I.$
\end{theorem}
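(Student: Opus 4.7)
The plan is to reduce this result to the classical existence theorem for null Cartan curves in Minkowski space-time (\cite{bonnor, null1}). The key observation is that, once the shape curvatures $z_1, z_2$ are prescribed, the only remaining datum needed to recover actual Cartan curvatures $\kappa_{\gamma}, \tau_{\gamma}$ is a single positive normalization constant; different choices of this constant produce curves differing by a scaling centered at $x_0$, which is exactly the ambiguity allowed by the theorem's ``according to a p-similarity with center $x_0$'' clause.

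First, I would read the defining equation $\tilde{\tau}_{\gamma} = -d\tau_{\gamma}/(2\tau_{\gamma}\,d\sigma_{\gamma}) = z_{2}(\sigma)$ as a separable scalar ODE for $\tau_{\gamma}$ as a function of $\sigma$. With the normalization $\tau_{\gamma}(\sigma_{0}) = 1$, this integrates uniquely to
\[
\tau_{\gamma}(\sigma) = \exp\!\left(-2\int_{\sigma_{0}}^{\sigma} z_{2}(u)\,du\right),
\]
which is smooth and positive throughout $I$, and I then set $\kappa_{\gamma}(\sigma) := z_{1}(\sigma)\,\tau_{\gamma}(\sigma)$. Using $ds = d\sigma/\sqrt{\tau_{\gamma}}$, the map $s(\sigma) = \int_{\sigma_{0}}^{\sigma} \tau_{\gamma}(u)^{-1/2}\,du$ is a smooth diffeomorphism onto an open interval, so $\kappa_{\gamma}$ and $\tau_{\gamma}$ may equally well be regarded as smooth functions of the pseudo-arc $s$.

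Next, I would invoke the classical existence theorem for null Cartan curves: there is a null Cartan curve in $\mathbb{M}^{4}$, unique up to a Lorentz transformation, whose Cartan curvatures are these $\kappa_{\gamma}(s), \tau_{\gamma}(s)$. To pin it down I would impose $\gamma(\sigma_{0}) = x_{0}$ together with the initial Cartan frame $\{\mathbf{L}^{0sim}, \mathbf{N}^{0sim}, \mathbf{W}_{1}^{0sim}, \mathbf{W}_{2}^{0sim}\}$ at $x_{0}$; the normalization $\tau_{\gamma}(\sigma_{0}) = 1$ is precisely what makes $\mathbf{L}^{sim} = \sqrt{\tau_{\gamma}}\mathbf{L}$ coincide with $\mathbf{L}$ (and $\mathbf{N}^{sim}$ with $\mathbf{N}$) at $\sigma_{0}$, so the initial data are admissible as a Cartan frame. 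A direct substitution into \eqref{n6} then confirms that the resulting $\gamma$ has shape Cartan curvatures $(\tilde{\kappa}_{\gamma}, \tilde{\tau}_{\gamma}) = (z_{1}, z_{2})$ and the prescribed shape Cartan frame at $x_{0}$.

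For uniqueness up to a p-similarity fixing $x_{0}$, suppose $\beta$ is another such curve. Theorem~\ref{teklik} yields an $f \in$ \textbf{Simp}$\left(\mathbb{M}^{4}\right)$ with $\beta = f\circ\gamma$, and because both curves pass through $x_{0}$ while sharing the p-similarity-invariant shape Cartan frame there (Lemma~1), $f$ must fix $x_{0}$. The main subtlety of the proof is the bookkeeping between the Cartan, $sim$-, and shape-Cartan frames at $\sigma_{0}$: the single degree of freedom in the constant $\tau_{\gamma}(\sigma_{0})$ corresponds exactly to post-composing by the one-parameter family of scalings centered at $x_{0}$, which is why the statement is phrased modulo p-similarities with center $x_{0}$ rather than as strict uniqueness.
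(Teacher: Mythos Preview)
Your argument is correct, but it follows a different route from the paper's own proof. The paper proceeds by direct construction: it writes down the linear ODE system $\dfrac{d\mathbf{K}}{d\sigma}=\mathbf{M}(\sigma)\mathbf{K}$ for the $sim$-frame $\mathbf{K}=(\mathbf{L}^{sim},\mathbf{N}^{sim},\mathbf{W}_1^{sim},\mathbf{W}_2^{sim})^T$ with coefficient matrix $\mathbf{M}$ built from $z_1$, appeals to existence and uniqueness for linear systems with the prescribed initial frame, and then verifies pseudo-orthonormality is preserved along the flow via the matrix identity $\mathbf{M}^t\mathbf{J}^\ast+\mathbf{J}^\ast\mathbf{M}=0$. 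The curve is then obtained explicitly as
\[
\gamma(\sigma)=x_0+\int_{\sigma_0}^{\sigma} e^{2\int z_2(\sigma)\,d\sigma}\,\mathbf{L}^{sim}(\sigma)\,d\sigma,
\]
and one checks that $\sigma$ is indeed the pseudo-de Sitter parameter. By contrast, you first solve the scalar ODE for $\tau_\gamma$ coming from $\tilde\tau_\gamma=z_2$, recover $\kappa_\gamma=z_1\tau_\gamma$, pass to pseudo-arc, and then invoke the classical existence theorem of \cite{bonnor,null1} as a black box; the normalization $\tau_\gamma(\sigma_0)=1$ is exactly what lets the given pseudo-orthonormal frame serve simultaneously as Cartan frame, $sim$-frame, and shape Cartan frame at $\sigma_0$. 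Your reduction is cleaner conceptually and makes the role of the scaling degree of freedom transparent, while the paper's approach is self-contained (it does not lean on the classical theorem) and yields the explicit integral formula used later in the examples and in the classification of self-similar null curves.
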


\begin{proof}
Let us consider the following system of differential equations with respect to
a matrix-valued function $\mathbf{K}\left(  \sigma\right)  =\left(
\mathbf{L}^{sim},\mathbf{N}^{sim},\mathbf{W}_{1}^{sim},\mathbf{W}_{2}%
^{sim}\right)  ^{T}$%
\begin{equation}
\frac{d\mathbf{K}}{d\sigma}\left(  \sigma\right)  =\mathbf{M}\left(
\sigma\right)  \mathbf{K}\left(  \sigma\right)  \label{f4}%
\end{equation}
with a given matrix
\[
\mathbf{M}\left(  \sigma\right)  =%
\begin{bmatrix}
0 & 0 & 1 & 0\\
0 & 0 & z_{1} & 1\\
-z_{1} & -1 & 0 & 0\\
-1 & 0 & 0 & 0
\end{bmatrix}
.
\]
The system $\left(  \ref{f4}\right)  $ has a unique solution $\mathbf{W}%
\left(  \sigma\right)  $ which satisfies the initial conditions
\[
\mathbf{K}\left(  \sigma_{0}\right)  =\left(  \mathbf{L}^{0sim},\mathbf{N}%
^{0sim},\mathbf{W}_{1}^{0sim},\mathbf{W}_{2}^{0sim}\right)  ^{T}%
\]
for $\sigma_{0}\in I.$ If $\mathbf{K}^{t}\left(  \sigma\right)  $ is the
transposed matrix of $\mathbf{K}\left(  \sigma\right)  ,$ then
\begin{align*}
\frac{d}{d\sigma}\left(  \mathbf{J}^{\ast}\mathbf{K}^{t}\mathbf{J}^{\ast
}\mathbf{K}\right)   &  =\mathbf{J}^{\ast}\frac{d}{d\sigma}\mathbf{K}%
^{t}\mathbf{J}^{\ast}\mathbf{K}+\mathbf{J}^{\ast}\mathbf{K}^{t}\mathbf{J}%
^{\ast}\frac{d}{d\sigma}\mathbf{K}\\
&  =\mathbf{J}^{\ast}\mathbf{K}^{t}\mathbf{M}^{t}\mathbf{J}^{\ast}%
\mathbf{K}+\mathbf{J}^{\ast}\mathbf{K}^{t}\mathbf{J}^{\ast}\mathbf{MK}\\
&  =\mathbf{J}^{\ast}\mathbf{K}^{t}\left(  \mathbf{M}^{t}\mathbf{J}^{\ast
}+\mathbf{J}^{\ast}\mathbf{M}\right)  \mathbf{K}=0
\end{align*}
since we have the equation $\mathbf{M}^{t}\mathbf{J}^{\ast}+\mathbf{J}^{\ast
}\mathbf{M}=%
\begin{bmatrix}
0
\end{bmatrix}
_{4\times4}$ where
\[
\mathbf{J}^{\ast}=%
\begin{bmatrix}
0 & 1 & 0 & 0\\
1 & 0 & 0 & 0\\
0 & 0 & 1 & 0\\
0 & 0 & 0 & 1
\end{bmatrix}
.
\]
Also, we have $\mathbf{J}^{\ast}\mathbf{W}^{t}\left(  \sigma_{0}\right)
\mathbf{J}^{\ast}\mathbf{W}\left(  \sigma_{0}\right)  =\mathbf{I}$ where
$\mathbf{I}$ is the unit matrix since $\mathbf{L}^{0sim},$ $\mathbf{N}%
^{0sim},$ $\mathbf{W}_{1}^{0sim},$ $\mathbf{W}_{2}^{0sim}$ is the
pseudo-orthonormal 4-frame. As a result, we find $\mathbf{J}^{\ast}%
\mathbf{X}^{t}\left(  \sigma\right)  \mathbf{J}^{\ast}\mathbf{X}\left(
\sigma\right)  =\mathbf{I}$ for any $\sigma\in I.$ It means that the vector
fields $\mathbf{L}^{sim},$ $\mathbf{N}^{sim},$ $\mathbf{W}_{1}^{sim},$ and
$\mathbf{W}_{2}^{sim}$ form pseudo-orthonormal frame field in the Minkowski space-time.

\qquad Let $\gamma:I\rightarrow\mathbb{M}^{4}$ be the null curve given by
\begin{equation}
\gamma\left(  \sigma\right)  =x_{0}+\int_{\sigma_{0}}^{\sigma}e^{2\int
z_{2}\left(  \sigma\right)  d\sigma}\mathbf{L}^{sim}\left(  \sigma\right)
d\sigma,\text{ \ \ \ \ \ \ \ \ }\sigma\in I. \label{d}%
\end{equation}
Using the equality $\left(  \ref{f4}\right)  $, we get that $\gamma\left(
\sigma\right)  $ is a null Cartan curve in Minkowski space-time with shape
Cartan curvatures $\tilde{\kappa}_{\gamma}\left(  \sigma\right)  =z_{1}\left(
\sigma\right)  $ and $\tilde{\tau}_{\gamma}\left(  \sigma\right)
=z_{2}\left(  \sigma\right)  .$ Also, we find $d\sigma=e^{-\int z_{2}\left(
\sigma\right)  d\sigma}ds$ by using $\left(  \ref{n0}\right)  $ and $\left(
\ref{f4}\right)  ,$ where $s$ is a pseudo-arc parameter; thus, $\sigma$ is the
pseudo-de Sitter parameter of the null Cartan curve $\gamma$. Besides, the
pseudo-orthonormal 4-frame $\left\{  \mathbf{L}^{sim},\mathbf{N}%
^{sim},\mathbf{W}_{1}^{sim},\mathbf{W}_{2}^{sim}\right\}  $ is a Cartan frame
of the null Cartan curve $\gamma$ under the p-similarity transformation.
\end{proof}

\begin{corollary}
In case of $\tilde{\tau}_{\gamma}\left(  \sigma\right)  =0,$ the Cartan
curvature $\tau_{\gamma}=c$ is a non-zero real constant. Then, the
parametrization of a null curve $\gamma:I\rightarrow\mathbb{M}^{4}$ with
$\tilde{\tau}_{\gamma}\left(  \sigma\right)  =0$ with respect to pseudo-de
Sitter parameter $\sigma$ is given by
\begin{equation}
\gamma\left(  \sigma\right)  =x_{0}+\frac{1}{c}\int_{\sigma_{0}}^{\sigma
}\mathbf{L}^{sim}\left(  \sigma\right)  d\sigma,\text{ \ \ \ \ \ \ \ \ }%
\sigma\in I \label{d1}%
\end{equation}
from the equation $\left(  \ref{4}\right)  $ and $\left(  \ref{d}\right)  .$
\end{corollary}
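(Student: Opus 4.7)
The plan is to observe that the hypothesis $\tilde{\tau}_{\gamma}\equiv 0$ forces $\tau_{\gamma}$ to be constant, and then to specialize either equation $(\ref{4})$ or the integral formula $(\ref{d})$ from the existence theorem to this case. The corollary is essentially a direct unpacking of definitions, so the work consists in keeping the normalizations consistent.

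First I would recover the constancy of $\tau_{\gamma}$. By the definition $\tilde{\tau}_{\gamma}=-\dfrac{d\tau_{\gamma}}{2\tau_{\gamma}\,d\sigma_{\gamma}}$, the assumption $\tilde{\tau}_{\gamma}(\sigma)=0$ for all $\sigma\in I$ gives $d\tau_{\gamma}/d\sigma=0$, hence $\tau_{\gamma}\equiv c$ for some real number $c$. The non-vanishing of $c$ is forced by the null Cartan hypothesis: indeed the very change of parameter $d\sigma_{\gamma}=\sqrt{\tau_{\gamma}}\,ds$ that allowed us to work with the pseudo-de Sitter parameter, together with the formula $(\ref{n2})$ for $\tau_{\gamma}$ and the linear independence of $F_{\gamma}$, require $\tau_{\gamma}\neq 0$ throughout $I$.

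Next I would convert $(\ref{4})$ into a statement about $\mathbf{L}^{sim}$. By construction $\mathbf{L}^{sim}=\sqrt{\tau_{\gamma}}\,\mathbf{L}$, so $(\ref{4})$ becomes
\[
\frac{d\gamma}{d\sigma_{\gamma}}=\frac{1}{\sqrt{\tau_{\gamma}}}\mathbf{L}=\frac{1}{\tau_{\gamma}}\mathbf{L}^{sim}.
\]
Substituting $\tau_{\gamma}=c$ and integrating from $\sigma_{0}$ to $\sigma$ subject to the initial condition $\gamma(\sigma_{0})=x_{0}$ produces $(\ref{d1})$ directly. Equivalently, one may derive the same expression from $(\ref{d})$ by noting that when $z_{2}=\tilde{\tau}_{\gamma}=0$, the integrating factor $\exp\!\bigl(2\!\int z_{2}(\sigma)\,d\sigma\bigr)$ becomes a positive constant; matching this constant against the factor $1/\tau_{\gamma}=1/c$ that appears in the derivation of $(\ref{d})$ (recall that $2\int\tilde{\tau}_{\gamma}\,d\sigma=-\ln|\tau_{\gamma}|+\text{const}$) again yields $(\ref{d1})$.

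There is no serious obstacle here; the only subtlety to watch is the relationship between the exponential integrating factor in $(\ref{d})$ and the explicit constant $1/c$ in $(\ref{d1})$, since both arise from two slightly different accountings of the same scalar $1/\tau_{\gamma}$. Verifying that the two viewpoints agree (equivalently, that the constant of integration hidden inside $\exp(2\int z_{2}\,d\sigma)$ is absorbed into the choice $\tau_{\gamma}=c$) is the one place where care is needed, and it is most transparently done by going through $(\ref{4})$ as above rather than through $(\ref{d})$.
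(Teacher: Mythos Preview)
Your proposal is correct and follows exactly the route the paper indicates: the corollary is stated without a separate proof, merely citing equations $(\ref{4})$ and $(\ref{d})$, and your derivation via $\dfrac{d\gamma}{d\sigma_{\gamma}}=\dfrac{1}{\tau_{\gamma}}\mathbf{L}^{sim}$ together with the observation that $\tilde{\tau}_{\gamma}=0$ forces $\tau_{\gamma}$ constant is precisely what that citation means. Your remark about reconciling the integrating factor in $(\ref{d})$ with the explicit $1/c$ is a welcome clarification of a point the paper leaves implicit.
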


\begin{example}
\label{ex}Let shape Cartan curvatures of a null curve $\gamma:I\rightarrow
\mathbb{M}^{4}$ be $\tilde{\kappa}_{\gamma}=0$ and $\tilde{\tau}_{\gamma
}=\frac{1}{\sigma}$. Choose the initial conditions
\begin{align}
\mathbf{L}^{0sim}  &  =\left(  \frac{1}{\sqrt{2}},0,\frac{1}{\sqrt{2}%
},0\right)  \mathbf{,\mathbf{N}}^{0sim}=\left(  \frac{-1}{\sqrt{2}},0,\frac
{1}{\sqrt{2}},0\right)  ,\label{n8}\\
\mathbf{W}_{1}^{0sim}  &  =\left(  0,\frac{1}{\sqrt{2}},0,\frac{1}{\sqrt{2}%
}\right)  ,\mathbf{W}_{2}^{0sim}=\left(  0,\frac{-1}{\sqrt{2}},0,\frac
{1}{\sqrt{2}}\right)  .\nonumber
\end{align}
Then, the system $\left(  \ref{f4}\right)  $ determine a null vector
$\mathbf{L}^{sim}$ given by%
\begin{equation}
\mathbf{L}^{sim}\left(  \sigma\right)  =\frac{1}{\sqrt{2}}\left(  \cosh
\sigma,\sinh\sigma,\cos\sigma,\sin\sigma\right)  \label{n9}%
\end{equation}
with $\mathbf{L}^{sim}\left(  0\right)  =\mathbf{L}^{0sim},$ in $\mathbb{M}%
^{4}$. Solving the equation $\left(  \ref{d}\right)  $ we obtain the null
Cartan curve $\gamma$ parameterized by
\[
\gamma\left(  \sigma\right)  =\frac{1}{\sqrt{2}}(\left(  \sigma^{2}+2\right)
\sinh\sigma-2\sigma\cosh\sigma,\left(  \sigma^{2}+2\right)  \cosh
\sigma-2\sigma\sinh\sigma,
\]%
\[
\left(  \sigma^{2}-2\right)  \sin\sigma+2\sigma\cos\sigma,\left(  2-\sigma
^{2}\right)  \cos\sigma+2\sigma\sin\sigma)
\]
for any $\sigma\in I.$
\end{example}

\section{Self-similar Null Cartan Curves}

\qquad A null Cartan curve $\gamma:I\rightarrow\mathbb{M}^{4}$ is called
\emph{self-similar }if any p-similarity $f\in G$ conserve globally $\gamma$
and $G$ acts transitively on $\gamma$ where $G$ is a one-parameter subgroup of
\textbf{Simp}$\left(  \mathbb{M}^{4}\right)  .$ This means that shape Cartan
curvatures $\tilde{\kappa}_{\gamma}$ and $\tilde{\tau}_{\gamma}$ are constant.
In fact, let $p=\gamma\left(  s_{1}\right)  $ and $q=\gamma\left(
s_{2}\right)  $ be two different points lying on $\gamma$ for any $s_{1}%
,s_{2}\in I.$ Since $G$ acts transitively on $\gamma,$ there is a p-similarity
$f\in G$ such that $f\left(  p\right)  =q$. Then, we find $\tilde{\kappa
}_{\gamma}\left(  s_{1}\right)  =\tilde{\kappa}_{\gamma}\left(  s_{2}\right)
$ and $\tilde{\tau}_{\gamma}\left(  s_{1}\right)  =\tilde{\tau}_{\gamma
}\left(  s_{2}\right)  ,$ which it implies the invariability of shape Cartan curvatures.

\qquad Now, we determine the parametrizations of all self-similar null curves
by means of the constant shape Cartan curvatures in the Minkowski space-time.
It can be separated to the four different cases as the following. We can take
the initial conditions $\left(  \ref{n8}\right)  $ in the example \ref{ex} for
the all cases.

\textbf{Case 1: }Let's take $\tilde{\kappa}_{\gamma_{1}}=0$ and $\tilde{\tau
}_{\gamma_{1}}=0.$ Then, using the equation $\left(  \ref{f4}\right)  $ we
find the null vector $\mathbf{L}^{sim}$ in the equation $\left(
\ref{n9}\right)  $. From the equation $\left(  \ref{d1}\right)  ,$ we obtain
the self-similar null curve parameterized by
\[
\gamma_{1}\left(  \sigma\right)  =\frac{1}{c\sqrt{2}}\left(  \sinh\sigma
,\cosh\sigma,\sin\sigma,-\cos\sigma\right)  .
\]

\textbf{Case 2: }Let's take $\tilde{\kappa}_{\gamma_{2}}=0$ and $\tilde{\tau
}_{\gamma_{2}}=b\neq0.$ Then, using the equation $\left(  \ref{f4}\right)  $
we find the null vector $\mathbf{L}^{sim}$%
\[
\mathbf{L}^{sim}\left(  \sigma\right)  =\frac{e^{2b\sigma}}{2\sqrt{2}}\left(
\cosh\sigma,\sinh\sigma,\cos\sigma,\sin\sigma\right)
\]
and from the equation $\left(  \ref{d}\right)  ,$ we get the self-similar null
curve given by
\begin{align*}
\gamma_{2}\left(  \sigma\right)   &  =\frac{1}{2\sqrt{2}}(\frac{\cosh\left(
w_{1}\sigma\right)  +\sinh\left(  w_{1}\sigma\right)  }{2b+1}+\frac
{\cosh\left(  w_{2}\sigma\right)  +\sinh\left(  w_{2}\sigma\right)  }{2b-1},\\
&  \frac{\cosh\left(  w_{1}\sigma\right)  +\sinh\left(  w_{1}\sigma\right)
}{2b+1}-\frac{\cosh\left(  w_{2}\sigma\right)  +\sinh\left(  w_{2}%
\sigma\right)  }{2b-1},
\end{align*}%
\[
\frac{4be^{2b\sigma}\cos\sigma+2e^{2b\sigma}\sin\sigma}{4b^{2}+1}%
,\frac{-2e^{2b\sigma}\cos\sigma+4be^{2b\sigma}\sin\sigma}{4b^{2}+1})
\]
where $w_{1}=\left(  2b+1\right)  $ and $w_{2}=\left(  2b-1\right)  .$

\textbf{Case 3: }Let's take $\tilde{\kappa}_{\gamma_{3}}=a\neq0$ and
$\tilde{\tau}_{\gamma_{3}}=0.$ Then, using the equation $\left(
\ref{f4}\right)  $ and $\left(  \ref{d1}\right)  $ it can be obtained the
self-similar null curve given by%
\[
\gamma_{3}\left(  \sigma\right)  =\frac{1}{c\sqrt{2}}\left(  \frac
{\sinh\left(  q_{1}\sigma\right)  }{q_{1}},\frac{\cosh\left(  q_{1}%
\sigma\right)  }{q_{1}},\frac{\sin\left(  q_{2}\sigma\right)  }{q_{2}}%
,\frac{-\cos\left(  q_{2}\sigma\right)  }{q_{2}}\right)
\]
where $q_{1}=\sqrt{-a+\sqrt{a^{2}+1}}$ and $q_{2}=\sqrt{a+\sqrt{a^{2}+1}}.$

\textbf{Case4: }Let's take $\tilde{\kappa}_{\gamma_{4}}=a\neq0$ and
$\tilde{\tau}_{\gamma_{4}}=b\neq0.$ Then, using the equation $\left(
\ref{f4}\right)  $ and $\left(  \ref{d}\right)  $ we obtain the self-similar
null curve given as%
\begin{align*}
\gamma_{4}\left(  \sigma\right)   &  =\frac{1}{2\sqrt{2}}(\frac{\cosh\left(
m_{1}\sigma\right)  +\sinh\left(  m_{1}\sigma\right)  }{m_{1}}+\frac
{\cosh\left(  m_{2}\sigma\right)  +\sinh\left(  m_{2}\sigma\right)  }{m_{2}%
},\\
&  \frac{\cosh\left(  m_{1}\sigma\right)  +\sinh\left(  m_{1}\sigma\right)
}{m_{1}}-\frac{\cosh\left(  m_{2}\sigma\right)  +\sinh\left(  m_{2}%
\sigma\right)  }{m_{2}},
\end{align*}%
\[
\frac{4be^{2b\sigma}\cos\left(  q_{2}\sigma\right)  +2q_{2}e^{2b\sigma}%
\sin\left(  q_{2}\sigma\right)  }{4b^{2}+q_{2}^{2}},\frac{-2q_{2}e^{2b\sigma
}\cos\left(  q_{2}\sigma\right)  +4be^{2b\sigma}\sin\left(  q_{2}%
\sigma\right)  }{4b^{2}+q_{2}^{2}})
\]
where $m_{1}=2b+q_{1},$ and $m_{2}=2b-q_{1}.$

\qquad A null curve is called a \textit{null helix }if it has the constant
Cartan curvatures not both zero in $\mathbb{M}^{4}$. The equations of null
helices satisfying $\tau\neq0$ are expressed by
\begin{equation}
\alpha\left(  s\right)  =\sqrt{\frac{1}{v^{2}+r^{2}}}\left(  \frac{1}{v}\sinh
vs,\frac{1}{v}\cosh vs,\frac{1}{r}\sin rs,-\frac{1}{r}\cos rs\right)
\label{n11}%
\end{equation}
where $v=\sqrt{\sqrt{\kappa^{2}+\tau^{2}}-\kappa}$ and $r=\sqrt{\sqrt
{\kappa^{2}+\tau^{2}}+\kappa}$ (\cite{bonnor}). In case of $\kappa=0,$ the
equation $\left(  \ref{n11}\right)  $ reduces to
\[
\alpha_{0}\left(  s\right)  =\frac{1}{\tau\sqrt{2}}\left(  \sinh\left(
\sqrt{\tau}s\right)  ,\cosh\left(  \sqrt{\tau}s\right)  ,\sin\left(
\sqrt{\tau}s\right)  ,-\cos\left(  \sqrt{\tau}s\right)  \right)  .
\]

\qquad The Cartan curvatures of the self-similar null curve $\gamma_{1}$ can
be given by $\kappa=0$ and $\tau=c\neq0$. Moreover, the Cartan curvatures of
$\gamma_{3}$ are $\kappa=ac$ and $\tau=c\neq0$ in the \textbf{Case} 3. Then,
the self-similar null curves $\gamma_{1}$ and $\gamma_{3}$ are null helices
which correspond to the null curves $\alpha_{0}$ and $\alpha$, respectively.
Hence, we can say that null helices satisfying $\tau\neq0$ are a class of
self-similar null curves in $\mathbb{M}^{4}$. Also, we can characterize\ the
null helices by means of the shape Cartan curvatures in $\mathbb{M}^{4}$.

\bigskip

Hakan Simsek and Mustafa \"{O}zdemir

Department of Mathematics

Akdeniz University

Antalya, TURKEY;

e-mail: hakansimsek@akdeniz.edu.tr.

\ \ \ \ \ \ \ \ \ \ \ mozdemir@akdeniz.edu.tr

\end{document}